\newcommand{\Gcal}{\mathcal{G}}
\newcommand{\Scal}{\mathcal{S}}
\newcommand{\Xcal}{\mathcal{X}}
\newcommand{\Ycal}{\mathcal{Y}}
\newcommand{\Zcal}{\mathcal{Z}}
\newcommand{\Field}{\ensuremath{\mathbb{K}}}
\newcommand{\cField}{\ensuremath{\ol{\mathbb{K}}}}
\newcommand{\Ring}{\ensuremath{\mathfrak{R}}}
\newcommand{\ini}{\ensuremath{\operatorname{ini}_{<}}}
\renewcommand{\subset}{\subseteq}  % Subsetsymbols
\theoremstyle{plain}% default
\newtheorem{lemma}{Lemma}
\newtheorem{thm}[lemma]{Theorem}
\newtheorem{cor}[lemma]{Corollary}
\theoremstyle{definition}
\newtheorem{defi}[lemma]{Definition}
\theoremstyle{remark}
\newtheorem{rem}[lemma]{Remark}
\newcommand{\CI}[2]{\left.#1 \Perp #2 \CIhelper}
\newcommand{\CIhelper}[1][\relax]{\def\argone{#1}\def\rrrrrrrelax{\relax}
  \ifx\argone\rrrrrrrelax\right.\else\,\middle|\,#1\right.{}\fi}
\newcommand{\ol}{\overline}
\newcommand{\Xin}{\Xcal_\text{\textup{in}}} %% the input state space
\newcommand{\Xtot}{\Xcal}                   %% the total state space
\title{Generalized Binomial Edge Ideals}
\author{Johannes Rauh}
\email{jrauh@mis.mpg.de}
\address{Max Planck Institute for Mathematics in the Sciences,
Inselstrasse 22, 
D-04103 Leipzig,Germany}
\date{\today}
\keywords{binomial ideals, binomial edge ideals, graphs, primary decomposition, conditional independence ideals}
\begin{document}

\begin{abstract}
  This paper studies a class of binomial ideals associated to graphs with finite vertex sets.  They generalize the
  binomial edge ideals, and they arise in the study of conditional independence ideals.  A Gröbner basis can be
  computed by studying paths in the graph.  Since these Gröbner bases are square-free, generalized binomial edge ideals
  are radical.  To find the primary decomposition a combinatorial problem involving the connected components of
  subgraphs has to be solved.  The irreducible components of the solution variety are all rational.
\end{abstract}

\maketitle

\section{Introduction}
\label{sec:introduction}

Let $\Xcal_{0}$ and $\Xin$ be finite sets, $d_{0}=|\Xcal_{0}|>1$, and denote $\Xtot=\Xcal_{0}\times\Xin$.  Let $\Field$
be a field, and consider the polynomial ring $\Ring = \Field[p_{x}:x\in\Xtot]$ with $|\Xtot|$ unknowns $p_{x}$ indexed by
$\Xtot$.  For all $i,j\in\Xcal_{0}$ and all $x,y\in\Xin$ let
\begin{equation*}
  f^{ij}_{xy} = p_{ix} p_{jy} - p_{iy} p_{jx}.
\end{equation*}
For any graph $G$ on $\Xin$ the ideal $I_{G}$ in $\Ring$ generated by the binomials $f^{ij}_{xy}$ for all
$i,j\in\Xcal_{0}$ and all edges $(x,y)$ in $G$ is called the $d_{0}$th \emph{binomial edge ideal} of $G$ over $\Field$.
This is a direct generalization of~\cite{HHHKR10:Binomial_Edge_Ideals} and~\cite{Ohtani11:Ideals_of_some_2-minors},
where the same ideals have been considered in the special case $d_{0}=2$.  For a comparison of the results of the
present paper to previous results see Remark~\ref{rem:comparison-to-HHH}.

One motivation to look at generalized binomial edge ideals comes from the study of conditional independence ideals.
Given $n+1$ random variables $X_{0},X_{1},\dots,X_{n}$, generalized binomial edge ideals correspond to a collection of
statements of the form (see~\cite{HHHKR10:Binomial_Edge_Ideals} for an explanation of the notation and further details)
\begin{equation*}
  \CI{X_{0}}{X_{R}}[X_{S}=x_{S}],
\end{equation*}
where $R\cup S=\{1,\dots,n\}$.  Such statements naturally occur in the study of robustness.  Implications of the
algebraic study of generalized binomial edge ideals will be studied in another paper~\cite{RauhAy12:Robustness_and_CI},
see also~\cite[Section~4]{HHHKR10:Binomial_Edge_Ideals}.  Generalized binomial edge ideals also cover the conditional
independence ideals associated with the intersection axiom in~\cite{Fink11:Binomial_ideal_of_intersection_axiom}.  A
different generalization of the results in~\cite{Fink11:Binomial_ideal_of_intersection_axiom} was recently studied
in~\cite{SwansonTaylor11:Minimial_Primes_of_CI_Ideals}.  The ideals $I^{\langle 1\rangle}$ defined
in~\cite{SwansonTaylor11:Minimial_Primes_of_CI_Ideals} are special cases of binomial edge ideal.

\section{The Gröbner basis}
\label{sec:grobner}

Choose a total order $>$ on $\Xin$ (e.g.~choose a bijection $\Xin\cong[N]$).  This induces a lexicographic monomial
order on~$\Ring$, also denoted by $>$, via
\begin{equation*}
  p_{ix}> p_{jy} \qquad\Longleftrightarrow\qquad
  \begin{cases}
    \text{ either } & i > j,\\
    \text{ or } & i = j \text{ and } x > y.
  \end{cases}
\end{equation*}
To construct a Gröbner basis for $I_{G}$ with respect to this order the following definitions are needed:
\begin{defi}
  \label{def:admissible-path}
  A path $\pi:x=x_0,x_1,\ldots,x_r=y$ from $x$ to $y$ in $G$ is called \emph{admissible} if
  \begin{enumerate}
  \item[(i)] $x_s\neq x_t$ for $s\neq t$, and $x < y$;
  \item[(ii)] for each $k=1,\ldots,r-1$ either $x_k<x$ or $x_k>y$;
  \item[(iii)] for any proper subset $\{y_1,\ldots,y_s\}$ of $\{x_1,\ldots,x_{r-1}\}$, the sequence $x,y_1,\ldots,y_s,y$
    is not a path.
  \end{enumerate}
  A function $\kappa: \{0,\dots,r\}\to [d]$ is called \emph{$\pi$-antitone} if it satisfies
%  $\kappa(0) > \kappa(r)$ and
  \begin{equation*}
    x_{s} < x_{t} \Longrightarrow \kappa(s) \ge \kappa(t),\text{ for all }0 \le s,t \le r.
  \end{equation*}
  $\kappa$ is \emph{strictly $\pi$-antitone} if it is $\pi$-antitone and satisfies $\kappa(0) > \kappa(r)$.
\end{defi}

The notion of $\pi$-antitonicity also applies to paths which are not necessarily admissible.  However, since admissible
paths are \emph{injective} (i.e.~they only pass at most once at each vertex), in the admissible case it is possible to
write $\kappa(\ell)$ instead of $\kappa(s)$, if $\ell=x_{s}$.

To any $x<y$, any path $\pi: x=x_0,x_1,\ldots,x_r=y$ from $x$ to $y$ and any function $\kappa:\{0,\dots,r\}\to\Xcal_{0}$
associate the monomial
\begin{equation*}
  u_{\pi}^{\kappa}= \prod_{k=1}^{r-1}p_{\kappa(k)x_k}.
\end{equation*}

\begin{thm}
  \label{thm:Gbasis}
  The set of binomials
  \begin{multline*}
    {\mathcal G}
    = \smash{\bigcup_{i<j} \,\bigg\{}
    \,u_{\pi}^{\kappa}f_{xy}^{\kappa(y)\kappa(x)}\,:\; x<y,\; \pi \text{ is an admissible path in }G\text{ from $x$ to $y$},
      \\
      \kappa\text{ is strictly $\pi$-antitone}\,\smash[t]{\bigg\}}
  \end{multline*}
  is a reduced Gröbner basis of $I_G$ with respect to the monomial order introduced above.
\end{thm}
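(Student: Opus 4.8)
The plan is to verify the three requirements for a reduced Gröbner basis in turn: that every element of $\mathcal{G}$ lies in $I_G$, that $\mathcal{G}$ is a Gröbner basis (via Buchberger's criterion), and that $\mathcal{G}$ is reduced. I first record the leading terms. Writing $i=\kappa(y)$ and $j=\kappa(x)$, strict $\pi$-antitonicity gives $i<j$, and since $x<y$ the largest variable occurring in $f^{ij}_{xy}=p_{ix}p_{jy}-p_{iy}p_{jx}$ is $p_{jy}$; hence $\ini(f^{ij}_{xy})=p_{ix}p_{jy}$ and
\[
  \ini\bigl(u_\pi^\kappa f_{xy}^{\kappa(y)\kappa(x)}\bigr)=p_{\kappa(y)x}\,p_{\kappa(x)y}\prod_{k=1}^{r-1}p_{\kappa(x_k)x_k}.
\]
Because an admissible path is injective, each vertex contributes a single factor, so this monomial is squarefree; the trailing monomial is the ``straight'' product $\prod_{k=0}^{r}p_{\kappa(x_k)x_k}$, with the endpoints untwisted.

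Next I would show $\mathcal{G}\subseteq I_G$ by induction on the length $r$ of the path. For $r=1$ the element is $\pm f^{ij}_{xy}$ with $(x,y)$ an edge, which are, up to sign, the defining generators of $I_G$; conversely these length-one elements generate $I_G$, so $\langle\mathcal{G}\rangle=I_G$. For the inductive step I would peel off the last edge $(x_{r-1},y)$ and express $u_\pi^\kappa f^{ij}_{xy}$ as an $\Ring$-combination of the $2\times2$ minors on that edge and of the element attached to the shorter path $x_0,\dots,x_{r-1}$. The underlying algebraic input is a Plücker-type three-term identity among $2\times2$ minors,
\[
  p_{ib}(p_{ia}p_{jc}-p_{ic}p_{ja})=p_{ia}(p_{ib}p_{jc}-p_{ic}p_{jb})+p_{ic}(p_{ia}p_{jb}-p_{ib}p_{ja}),
\]
together with variants that mix in a third row where the value of $\kappa$ changes across the spliced vertex; writing $c=\kappa(x_1)$, the two-edge case is the identity $p_{jy}f^{ic}_{xx_1}-p_{cx}f^{ij}_{yx_1}-p_{iy}f^{jc}_{xx_1}=p_{cx_1}f^{ij}_{xy}$ (valid when $x_1>y$, the case $x_1<x$ being symmetric). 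The bookkeeping of which rows appear is dictated entirely by the antitone function $\kappa$.

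With $\mathcal{G}\subseteq I_G$ and $\langle\mathcal{G}\rangle=I_G$ in hand, I would prove the Gröbner property through Buchberger's criterion. By the product criterion only S-pairs whose leading monomials share a variable need to be examined, and by the description above two leading monomials share a variable precisely when the two admissible paths meet in a common vertex carrying the same row-index in their leading monomials (counting the endpoint twist). The heart of the argument is to enumerate these overlap configurations and, in each, to rewrite the S-polynomial as a standard expression in $\mathcal{G}$ with remainder zero. Concretely this means cutting $\pi_1$ and $\pi_2$ at the shared vertices and re-gluing the pieces into new admissible paths, then reading off compatible antitone functions on them; conditions (ii) and (iii) are exactly what guarantee that the reassembled paths are again admissible and that each rewriting strictly lowers the leading monomial, so that the reduction terminates at $0$. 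I expect this combinatorial S-polynomial analysis—verifying that every overlap of two path-monomials can be resolved by gluing—to be the main obstacle.

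Finally I would check reducedness. Every element of $\mathcal{G}$ is monic, so it remains to show that no leading monomial divides any term of another element. Since leading monomials are squarefree and supported on the vertex set of the path with prescribed row-indices, $\ini(g_1)\mid\ini(g_2)$ would force the vertex set of $\pi_1$, with its row labels, to sit inside that of $\pi_2$; condition (iii) forbids a proper admissible subpath and thereby forces $\pi_1=\pi_2$ and $\kappa_1=\kappa_2$. The same support analysis, now using (i)--(iii), rules out $\ini(g_1)$ dividing the trailing monomial $\prod_{k=0}^{r}p_{\kappa_2(x_k)x_k}$ of any $g_2$. Together these show $\mathcal{G}$ is reduced, completing the proof.
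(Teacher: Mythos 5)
Your Steps 1 and 3 are essentially sound and close to what the paper does. One caveat on Step 1: the paper splits an admissible path at the interior vertex $x_k=\max\{x_\ell:x_\ell<x\}$ (or symmetrically at the minimum of the vertices above $y$) precisely so that both halves are again \emph{admissible} and the induction hypothesis applies as stated; your plan of peeling off the last edge also works, but only if you strengthen the induction hypothesis to arbitrary injective paths and arbitrary index pairs, since the truncated path $x_0,\dots,x_{r-1}$ need not be admissible. Your three-term identity is correct and is the same algebraic input the paper uses.

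The genuine gap is Step 2, which is the heart of the theorem. You reduce it to ``enumerate the overlap configurations of two admissible paths and resolve each by cutting and re-gluing into new admissible paths,'' flag this as the main obstacle, and do not carry it out. Moreover, the key claim that conditions (ii) and (iii) ``guarantee that the reassembled paths are again admissible'' is not true as stated: a path inside the union $\pi\cup\sigma$ of two admissible paths is in general not admissible (it can violate both (ii) and (iii)), which is exactly why the $d_0=2$ proofs of Herzog et al.\ and Ohtani require a lengthy case analysis. The paper avoids this entirely. It first proves Lemma~\ref{lem:pi-antiton}: if a labeling fails to be antitone along \emph{any} path, then a minimal non-antitone subpath is automatically admissible, and swapping the labels at its endpoints produces an element of $\mathcal G$ whose initial term divides the given monomial. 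Then, for an S-pair $S$ of two overlapping elements, it notes that $S$ is homogeneous for the fine multigrading recording both row-indices and column-indices. By the lemma, the fully reduced remainder $S'$ has leading monomial $S'_1$ satisfying $\min\{i:p_{ix}\mid S'_1\}\ge\max\{i:p_{iy}\mid S'_1\}$ for all $x<y$, and such a monomial is the \emph{unique minimal} monomial in its multidegree; since reduction preserves binomiality and multidegree, the trailing term of $S'$ would have to be a strictly smaller monomial of the same multidegree, which is impossible, so $S'=0$. To complete your proof you must either supply the full overlap case analysis (substantial, and naive re-gluing will not stay within admissible paths) or adopt an argument of this ``unique minimal monomial per multidegree'' type.
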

The role of $\pi$-antitonicity is the following: In smaller monomials $\prod_{k=1}^{r} p_{i_{k}x_{k}}$, smaller indices
$i_{k}$ are associated to larger points~$x_{k}$.  Hence the initial term of
$u_{\pi}^{\kappa}f_{xy}^{\kappa(y)\kappa(x)}$ is $u_{\pi}^{\kappa}p_{\kappa(y)x}p_{\kappa(x)y}$.  This explains why in
the definition of $\mathcal{G}$ the point $x$ is associated to the index $\kappa(y)$, and vice versa.  The main idea of
the proof of Theorem~\ref{thm:Gbasis} is that reduction modulo $\mathcal{G}$ changes the association of the indices
$\{i_{k}\}$ and the points $\{x_{k}\}$ until the resulting monomial is minimal.  The following lemma is a first step:
\begin{lemma}
  \label{lem:pi-antiton}
  Let $\pi:x_{0},\dots,x_{r}$ be a path in $G$, and let $\kappa:\{0,\dots,r\}\to [d]$ be an arbitrary function.  If
  $\kappa$ is not $\pi$-antitone, then there exists $g\in\mathcal{G}$ such that $\ini(g)$ divides the initial term of
  $u_{\pi}^{\kappa} f_{xy}^{\kappa(y)\kappa(x)}$.
\end{lemma}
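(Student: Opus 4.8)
The plan is to first pin down the leading monomial $M\defas\ini(u_\pi^\kappa f_{xy}^{\kappa(y)\kappa(x)})$ and then read the required divisibility off a purely combinatorial picture. Comparing the two terms of $f_{xy}^{\kappa(y)\kappa(x)}=p_{\kappa(y)x}p_{\kappa(x)y}-p_{\kappa(y)y}p_{\kappa(x)x}$ in the lexicographic order shows that its initial term always places the \emph{larger} of the two indices $\kappa(x),\kappa(y)$ on the \emph{larger} of the two points $x,y$; hence
\begin{equation*}
  M=\prod_{k=0}^{r}p_{\mu(k)x_k},
\end{equation*}
where $\mu(k)=\kappa(k)$ for $0<k<r$, while $\{\mu(0),\mu(r)\}=\{\kappa(0),\kappa(r)\}$ is arranged so that the larger index sits on the larger endpoint (we may assume $\kappa(x)\ne\kappa(y)$, as otherwise the generator is $0$). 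Since $M$ is squarefree, a basis element $g\in\mathcal{G}$ satisfies $\ini(g)\mid M$ precisely when the vertices of its admissible path lie in $\{x_0,\dots,x_r\}$ and the index that $\ini(g)$ attaches to each such vertex agrees with $\mu$ there. So the whole task reduces to: produce an admissible path on a subset of the $x_k$ carrying a strictly antitone $\kappa'$ whose associated initial index-assignment is the restriction of $\mu$.

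To build it I would exploit an \emph{inversion} of $\mu$, i.e.\ a pair $x_s<x_t$ with $\mu(s)<\mu(t)$; note the endpoints $x_0,x_r$ always form one (this is why $M$ looks non-reduced), and it is exactly the failure of $\pi$-antitonicity that forbids the trivial choice $\pi'=\pi$, $\kappa'=\kappa$. Among all inversions of $\mu$ I would choose one, $x_s,x_t$, minimizing the \emph{path-distance} $|s-t|$, take $\pi'$ to be the segment of $\pi$ between $x_s$ and $x_t$ oriented from the smaller to the larger endpoint, and define $\kappa'$ by keeping $\mu$ on the interior and \emph{unswapping} the endpoints, i.e.\ giving the larger endpoint-index to the smaller endpoint. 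Then the inversion forces $\kappa'(0)>\kappa'(r)$, so $\kappa'$ is strict at the endpoints by construction and its initial assignment is $\mu|_{\pi'}$ by the computation above.

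The hard part will be showing that this one extremal choice delivers admissibility condition~(ii) and full antitonicity of $\kappa'$ \emph{simultaneously}; this is where the real work lies. Both come from minimality by the same mechanism: if an interior vertex of $\pi'$ had value strictly between the two endpoints, or if $\kappa'$ were non-antitone on some pair, then comparing the offending vertex with an endpoint — or the two offending interior vertices with each other — would exhibit a $\mu$-inversion of \emph{strictly smaller} path-distance, contradicting the choice of $x_s,x_t$. Running this case analysis (offending vertex below $x_s$, above $x_t$, or an interior–interior pair) is the technical heart; its pay-off is that every interior vertex of $\pi'$ lies below $x_s$ or above $x_t$, which is condition~(ii), and that $\kappa'$ is $\pi'$-antitone. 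Condition~(i) is inherited from $\pi$, and condition~(iii) can be secured afterwards by repeatedly shortcutting $\pi'$ inside its own vertex set; since shortcutting only deletes vertices, it preserves~(i), (ii), the antitonicity and strictness of $\kappa'$, and the divisibility. The resulting $g=u_{\pi'}^{\kappa'}f_{x'y'}^{\kappa'(y')\kappa'(x')}$ then lies in $\mathcal{G}$ and satisfies $\ini(g)\mid M$, which is the claim.
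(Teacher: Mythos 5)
Your proof is correct and follows essentially the same strategy as the paper's: locate a minimal violation (you minimize the path-distance of an inversion; the paper takes a minimal non-antitone subpath --- the same mechanism), cut out the corresponding segment of $\pi$, swap the two endpoint indices, and use minimality twice over to squeeze out both admissibility condition~(ii) and the antitonicity of the new labelling; I checked that the deferred case analysis (interior--interior pair, and interior vertex against either endpoint) goes through exactly as you predict. Two points where your write-up is actually more careful than the paper's: first, you minimize over inversions of the assignment $\mu$ that genuinely appears in the initial monomial $M$, whereas the paper minimizes over violations of $\kappa$ itself; in the case $\kappa(x)>\kappa(y)$ the initial term of $f_{xy}^{\kappa(y)\kappa(x)}$ swaps the endpoint indices, so $\mu\neq\kappa$ at the endpoints and the paper's construction can output a $g$ whose initial term contains $p_{\kappa(0)x_0}$ while $M$ contains $p_{\kappa(r)x_0}$ (e.g.\ $\pi:2,1,3$ with $\kappa=3,2,1$), so your variant is the one that proves the lemma as stated; in the only place the lemma is invoked (Step~2 of Theorem~\ref{thm:Gbasis}) one has $\kappa(0)<\kappa(r)$, so $\mu=\kappa$ and the two versions coincide. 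Second, you explicitly secure condition~(iii) by shortcutting, which the paper's proof passes over in silence. (Both proofs tacitly assume $\pi$ is injective so that $M$ is squarefree; this holds in the application.)
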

\begin{proof}
  Let $\tau:y_{0},\dots,y_{s}$ be a minimal subpath of $\pi$ with respect to the property that the restriction of
  $\kappa$ to $\tau$ is not $\tau$-antitone.  This means that $\kappa$ is $\tau_{0}$-antitone and $\tau_{s}$-antitone,
  where $\tau_{0}=y_{1},\dots,y_{s}$ and $\tau_{s}=y_{0},\dots,y_{s-1}$.  Assume without loss of generality that
  $y_{0}<y_{s}$, otherwise reverse $\tau$.  The minimality implies that $\kappa(y_{0})<\kappa(y_{s})$.  It follows that
  $\tau$ is admissible: By minimality, if $y_{0}<y_{k}<y_{s}$, then $\kappa(y_{k}) \ge \kappa(y_{s}) > \kappa(y_{0}) \ge
  \kappa(y_{k})$, a contradiction.  Define
  \begin{equation*}
    \tilde\kappa(k) =
    \begin{cases}
      \kappa(s), & \text{ if }k=0, \\
      \kappa(0), & \text{ if }k=s, \\
      \kappa(k), & \text{ if }0<k<s.
    \end{cases}
  \end{equation*}
  Then $\tilde\kappa$ is $\tau$-antitone, and
  $\ini(u_{\tau}^{\tilde\kappa}f_{y_{0}y_{s}}^{\tilde\kappa(y_{s})\tilde\kappa(y_{0})})$ divides
  $\ini(u_{\pi}^{\tilde\kappa}f_{xy}^{\tilde\kappa(y)\tilde\kappa(x)})$.
\end{proof}

\begin{proof}[Proof of Theorem~\ref{thm:Gbasis}]
  The proof is organized in three steps.

  \medskip
  \noindent
  {\em Step 1: ${\mathcal G}$ is a subset of $I_G$.}
  Let $\pi: x=x_0,x_1,\ldots,x_{r-1},x_r=y$ be an admissible path in~$G$.  The proof that $u_\pi^{\kappa}
  f_{xy}^{\kappa(j)\kappa(i)}$ belongs to $I_G$ is by induction on~$r$.  Clearly the assertion is true if $r = 1$, so
  assume $r > 1$.  Let $A = \{ x_k : x_k < x \}$ and $B = \{ x_\ell : x_\ell > y \}$.  Then either $A \neq \emptyset$ or
  $B \neq \emptyset$.

  Suppose $A \neq \emptyset$ and set $x_{k} = \max A$.  The two paths $\pi_1 : x_{k}, x_{k-1}, \ldots, x_1, x_0=x$
  and $\pi_2 : x_{k}, x_{k+1}, \ldots, x_{r-1}, x_r = y$ in $G$ are admissible.
  Let $\kappa_{1}$ and $\kappa_{2}$ be the restrictions of $\kappa$ to $\pi_{1}$ and $\pi_{2}$.  Let $a=\kappa(r)$,
  $b=\kappa(0)$ and $c=\kappa(k)$.  The calculation
  \begin{multline*}
    (p_{by}p_{ax}-p_{bx}p_{ay})p_{cx_{k}}
    \\
    =
    (p_{bx_{k}}p_{cx} - p_{bx}p_{cx_{k}})p_{ay}
    + (p_{ax_{k}}p_{by} - p_{ay}p_{bx_{k}})p_{cx}
    - (p_{ax_{k}}p_{cx} - p_{ax}p_{cx_{k}})p_{by}
  \end{multline*}
  implies that $u_{\pi}^{\kappa}f_{xy}^{ab}$ lies in the ideal generated by $u_{\pi_{1}}^{\kappa_{1}}f_{x_{k}x}^{bc}$,
  $u_{\pi_{2}}^{\kappa_{2}}f_{x_{k}y}^{ab}$ and $u_{\pi_{1}}^{\kappa_{1}}f_{x_{k}x}^{ac}$.  By induction it lies in
  $I_{G}$.
  The case $B \neq \emptyset$ can be treated similarly.

  \medskip
  \noindent
  {\em Step 2: ${\mathcal G}$ is a Gröbner basis of $I_G$.}
  Let $\pi:x_0,\dots,x_{r}$ and $\sigma:y_{0},\dots,y_{s}$ be admissible paths in $G$ with $x_{0}<x_{r}$ and
  $y_{0}<y_{s}$, and let $\kappa$ and $\mu$ be $\pi$- and $\sigma$-antitone.  By Buchberger's criterion it suffices to
  show that the $S$-pairs $S := S(u_\pi^{\kappa} f_{x_{0}x_{r}}^{\kappa(r)\kappa(0)}, u_\sigma^{\mu}
  f_{y_{0}y_{s}}^{\mu(s)\mu(0})$ reduces to zero.

  If $S\neq 0$, then $S$ is a binomial.  Write $S=S_{1}-S_{2}$, where $S_{1}=\ini(S)$.
  % Then $S_{1}$ and $S_{2}$ are two monomials in which the same lower indices occur.
  $S$ is homogeneous with respect to the multidegrees given by
  \begin{equation*}
    \deg(p_{ix})_{j} = \delta_{ij} =
    \begin{cases}
      1, & \text{ if } i=j, \\
      0, & \text{ else,}
    \end{cases}
  \end{equation*}
  and
  \begin{equation*}
    \deg(p_{ix})_{y} = \delta_{xy} =
    \begin{cases}
      1, & \text{ if } x=y, \\
      0, & \text{ else}
    \end{cases}
  \end{equation*}
  (this is a multidegree with $|\Xcal_{0}| + |\Xin|$ components).

  If $\pi$ and $\sigma$ are disjoint paths, then $S$ trivially reduces to zero, since
  $u_\pi^{\kappa}f_{x_{0}x_{r}}^{\kappa(r)\kappa(0)}$ and $u_\sigma^{\mu} f_{y_{0}y_{s}}^{\mu(s)\mu(0)}$ contain
  different variables.
  So assume that $\pi$ and $\sigma$ meet and that $S\neq 0$.  Then $S_{1}$ and $S_{2}$ are monomials, and the unknowns
  $p_{ix}$ occurring in $S_{1}$ and $S_{2}$ satisfy $x\in\pi\cup\sigma$.
  Assume that there are $x<y$ such that $D_{x}:=\min\{i\in\Xcal_{0}: p_{ix}\,|\, S_{1}\} < \max\{i\in\Xcal_{0}:
  p_{iy}\,|\,S_{1}\}=:D_{y}$.  Since $\pi\cup\sigma$ is connected there is an injective path $\tau:z_{0},\dots,z_{s}$
  from $x=z_{0}$ to $y=z_{s}$ in $\pi\cup\sigma$.  Choose a map $\lambda:\{0,\dots,s\}\to\Xcal_{0}$ such that
  $\lambda(0) = D_{x}$, $\lambda(s) = D_{y}$ and $p_{\lambda(a)a}\,|\,S_{1}$ for all $0\le a\le s$.  Then
  $u_{\tau}^{\lambda}$ divides $S_{1}$, and $\lambda$ is not $\tau$-antitone.  So Lemma~\ref{lem:pi-antiton} applies,
  and $S$ can be reduced to a smaller binomial.

  Let $S'$ be the reduction of $S$ modulo $\Gcal$.  If $S'\neq 0$, then let $S'_{1}=\ini(S')$.  The above argument shows
  that $\min\{i\in\Xcal_{0}: p_{ix}\,|\, S'_{1}\} \ge \max\{i\in\Xcal_{0}: p_{iy}\,|\,S'_{1}\}$ for all $x<y$.  This
  property characterizes $S'_{1}$ as the unique minimal monomial in $\Ring$ with multidegree $\deg(S'_{1}) = \deg(S)$.  But
% Let $S'_{2} = S' - S'_{1}$.
  since the reduction algorithm turns binomials into binomials, $S'-S'_{1}$ is also a monomial of multidegree $\deg(S)$,
  and smaller than $\deg(S'_{1})$.  This contradiction shows $S'=0$.

  \medskip
  \noindent
  {\em Step 3: $\Gcal$ is reduced.}
  Let $\pi:x_0,\dots,x_{r}$ and $\sigma:y_{0},\dots,y_{s}$ be admissible paths in $G$ with $x_{0}<x_{r}$ and
  $y_{0}<y_{s}$, and let $\kappa$ and $\mu$ be $\pi$- and $\sigma$-antitone.
  Suppose that $u_\pi^{\kappa} p_{\kappa(r)x_{0}}p_{\kappa(0)x_{r}}$ divides either $u_\sigma^{\mu} p_{\mu(s)y_{0}}p_{\mu(0)y_{s}}$ or
  $u_\sigma^{\mu} p_{\mu(s)y_{s}} p_{\mu(0)y_{0}}$.  Then $\{ x_0, \ldots, x_r \}$ is a % proper
  subset of $\{ y_0, \ldots, y_s \}$, and $\kappa(b) = \mu(\sigma^{-1}(x_{b}))$ for $0<b<r$.
  From admissibility follows $x_{0}\le y_{0}<y_{s}\le x_{r}$ and $\kappa(0)\ge\mu(0) > \mu(s)\ge\kappa(r)$.

  If $x_0<y_{0}$, then $p_{\kappa(r)x_{0}}$ divides $u_{\sigma}^{\mu}$, and so $x_{0}=y_{t}$ for some $t<s$ with
  $\mu(t)=u=\kappa(r)$.  On the other hand, since $y_{t}\le y_{0}$, it follows that $\mu(t)\ge\mu(0) > \kappa(r)$, a
  contradiction.  Hence $x_{0}=y_{0}$.  Similarly, by a symmetric argument, $x_{r}=y_{s}$.
  This means that $\pi$ is a sub-path of $\sigma$.  By Definition~\ref{def:admissible-path}, $\pi$ equals $\sigma$.
% (up to a possible change of direction).
  Therefore, $u_\pi^{\kappa} f_{x_{0}x_{r}}^{\kappa(r)\kappa(0)}$ and $u_\sigma^{\mu} f_{y_{0}y_{s}}^{\mu(s)\mu(0)}$ have the
  same (total) degree, and hence they agree.
\end{proof}

\begin{cor}
  \label{cor:radical}
  $I_G$ is a radical ideal.
\end{cor}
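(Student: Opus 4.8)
The plan is to read off radicality directly from the reduced Gröbner basis produced in Theorem~\ref{thm:Gbasis}, via the standard principle that an ideal whose initial ideal is radical is itself radical. Concretely, I would first verify that $\ini(I_G)$ is generated by squarefree monomials, and then deduce $I_G=\sqrt{I_G}$ from this.

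First I would record the shape of the initial terms. For an admissible path $\pi:x=x_0,\dots,x_r=y$ and a strictly $\pi$-antitone $\kappa$, Theorem~\ref{thm:Gbasis} (and the discussion preceding Lemma~\ref{lem:pi-antiton}) identifies
\[
  \ini\!\left(u_\pi^\kappa f_{xy}^{\kappa(y)\kappa(x)}\right)
  = u_\pi^\kappa\, p_{\kappa(y)x}\, p_{\kappa(x)y}
  = p_{\kappa(y)x_0}\Big(\prod_{k=1}^{r-1} p_{\kappa(k)x_k}\Big) p_{\kappa(x)x_r}.
\]
Condition~(i) of Definition~\ref{def:admissible-path} forces an admissible path to be injective, so the vertices $x_0,x_1,\dots,x_r$ are pairwise distinct. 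Hence the $r+1$ variables above have pairwise distinct second indices, are therefore pairwise distinct, and each occurs to the first power. Thus every initial term of a generator of $\Gcal$ is a squarefree monomial, and since $\Gcal$ is a Gröbner basis, $\ini(I_G)=(\ini(g):g\in\Gcal)$ is a squarefree monomial ideal, hence radical.

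It then remains to invoke (and I would include the two-line argument rather than merely cite it) the fact that $I_G$ is radical once $\ini(I_G)$ is. Given $f\in\sqrt{I_G}$, pick $n$ with $f^n\in I_G$; then $\ini(f)^n=\ini(f^n)\in\ini(I_G)$, so $\ini(f)\in\sqrt{\ini(I_G)}=\ini(I_G)$. This yields $\ini(\sqrt{I_G})\subset\ini(I_G)$, the reverse inclusion being automatic from $I_G\subset\sqrt{I_G}$. Now let $\bar f$ be the normal form of $f$ modulo $\Gcal$: it lies in $\sqrt{I_G}$ and, by construction, has no monomial lying in $\ini(I_G)$. If $\bar f\neq 0$, its leading term would lie in $\ini(\sqrt{I_G})=\ini(I_G)$, a contradiction; so $\bar f=0$ and $f\in I_G$. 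Therefore $\sqrt{I_G}=I_G$.

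The computation is essentially routine, and the only step requiring any care is the squarefreeness check; there the sole input needed is the injectivity of admissible paths, which is immediate from Definition~\ref{def:admissible-path}(i). The general initial-ideal principle is well known and presents no real obstacle, so I do not anticipate serious difficulty in carrying out this proof.
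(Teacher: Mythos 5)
Your proposal is correct and takes essentially the same route as the paper: the paper's proof of Corollary~\ref{cor:radical} likewise deduces radicality from the square-freeness of the initial terms in Theorem~\ref{thm:Gbasis}, merely citing the general fact (with a pointer to \cite[Corollary~2.2]{HHHKR10:Binomial_Edge_Ideals}) where you spell out the normal-form argument. Both your square-freeness check via injectivity of admissible paths and your reduction argument are sound.
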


\begin{proof}
  The assertion follows from Theorem~\ref{thm:Gbasis} and the following general fact: A homogeneous ideal that has a
  Gröbner basis with square-free initial terms is radical.  See the proof
  of~\cite[Corollary~2.2]{HHHKR10:Binomial_Edge_Ideals} for details.
\end{proof}

\section{The primary decomposition}
\label{sec:primdec}

Since $I_{G}$ is radical, in order to compute the primary decomposition of the ideal it is enough to compute the minimal
primes.  From this it will be easy to deduce the irreducible decomposition of the variety $V_{G}$ of $I_{G}$ in the case
of characteristic zero.
The following definition is needed: Two vectors $v,w$ (living in the same $\Field$-vector space) are \emph{proportional}
whenever $v=\lambda w$ or $w=\lambda v$ for some $\lambda\in\Field$.  A set of vectors is \emph{proportional} if each pair
is proportional.  Since $\lambda=0$ is allowed, proportionality is not transitive: If $v$ and $w$ are proportional and
if $u$ and $v$ are proportional, then % we can conclude that
$u$ and $w$ need not proportional, because $v$ may vanish.

Let $V_{G}$ be the variety of $I_{G}$, which is a subset of $\Field^{\Xcal_{0}\times\Xin}$.  As usual, elements of
$\Field^{\Xcal_{0}\times\Xin}$ will be denoted with the same symbol $p=(p_{ix})_{i\in\Xcal_{0},x\in\Xin}$ as the
unknowns in the polynomial ring $\Ring = \Field[p_{ix}:(i,x)\in\Xcal_{0}\times\Xin]$.  Any
$p\in\Field^{\Xcal_{0}\times\Xin}$ can be written as a $d_{0}\times|\Xin|$-matrix.  Each binomial equation in $I_{G}$
imposes conditions on this matrix saying that certain submatrices have rank 1.  For a fixed edge $(x,y)$ in $G$ the
equations $f^{ij}_{xy}=0$ for all $i,j\in\Xcal_{0}$ require that the submatrix $(p_{kz})_{k\in\Xcal_{0},z\in\{x,y\}}$
has rank one.  More generally, if $K\subset G$ is a clique (i.e.~a complete subgraph), then the submatrix
$(p_{kz})_{k\in\Xcal_{0},z\in K}$ has rank one.  This means that all columns of this submatrix are proportional.  The
columns of $p$ will be denoted by $\tilde p_{x}$, $x\in\Xin$.  A point $p$ lies in $V_{G}$ if and only if $\tilde p_{x}$
and $\tilde p_{y}$ are proportional for all edges $(x,y)$ of $G$.

Even if the graph $G$ is connected, not all columns $\tilde p_{x}$ must be proportional to each other, since
proportionality is not a transitive relation.  Instead, there are ``blocks'' of columns such that all columns within one
block are proportional.
For any subset $\Ycal\subseteq\Xin$ denote by $G_{\Ycal}$ the subgraph of $G$ induced by $\Ycal$.
Then:
\begin{itemize}
\item A point $p$ lies in $V_{G}$ if and only if $\tilde p_{x}$ and $\tilde p_{y}$ are proportional whenever
  $x,y\in\Scal$ lie in the same connected component of $G_{\Scal}$, where $\Scal=\{x\in\Xin:\tilde p_{x}\neq 0\}$.
\end{itemize}

Let $V_{G,\Ycal}$ be the set of all $p\in\Field^{\Xcal_{0}\times\Xin}$ for which
$\tilde p_{x}=0$ for all $x\in\Xin\setminus\Ycal$ and for which $\tilde p_{x}$ and $\tilde p_{y}$ are
proportional whenever $x,y\in\Xin$ lie in the same connected component of $G_{\Ycal}$.  Then
\begin{equation}
  \label{eq:decomposition-VG}
  V_{G} = \bigcup_{\Ycal\subseteq\Xin} V_{G,\Ycal}.
\end{equation}
The sets $V_{G,\Ycal}$ are rational irreducible algebraic varieties:
\begin{lemma}
  \label{lem:VGY-is-sirreducible}
  For any $\Ycal\subseteq\Xin$ the set $V_{G,\Ycal}$ is the variety of the ideal $I_{G,\Ycal}$
  generated by the monomials
  \begin{equation}
    \label{eq:monomials}
    p_{ix} \qquad\text{for all }x\in\Xin\setminus\Ycal\text{ and }i\in\Xcal_{0},
  \end{equation}
  and the binomials $f_{xy}^{ij}$ for all $i,j\in\Xcal_{0}$ and all $x,y\in\Ycal$ that lie in the same connected
  component of $G_{\Ycal}$.  The ideal $I_{G,\Ycal}$ is prime, and the variety $V_{G,\Ycal}$ is rational.
\end{lemma}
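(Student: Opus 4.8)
The plan is to treat the three assertions in turn: the set-theoretic identity $V_{G,\Ycal}=V(I_{G,\Ycal})$, primeness of $I_{G,\Ycal}$, and rationality of the variety. The first is just a translation of generators into rank conditions. A point $p$ lies in $V(I_{G,\Ycal})$ if and only if every generator vanishes at $p$: the monomials $p_{ix}$ with $x\in\Xin\setminus\Ycal$ vanish exactly when $\tilde p_{x}=0$ for all such $x$, while the binomials $f_{xy}^{ij}$ for all $i,j\in\Xcal_{0}$ vanish exactly when the two-column submatrix $(\tilde p_{x},\tilde p_{y})$ has rank at most one, i.e.\ when $\tilde p_{x}$ and $\tilde p_{y}$ are proportional. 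Imposing the latter for every pair $x,y$ lying in a common connected component of $G_{\Ycal}$ reproduces precisely the defining conditions of $V_{G,\Ycal}$, so the two sets coincide.

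For primeness I would first eliminate the variables killed by the monomial generators. Since $p_{ix}\in I_{G,\Ycal}$ for every $x\in\Xin\setminus\Ycal$ and every $i$, the quotient $\Ring/I_{G,\Ycal}$ is isomorphic to $\Field[p_{ix}:x\in\Ycal]/J$, where $J$ is generated by the binomials $f_{xy}^{ij}$. These binomials only couple columns lying in one connected component $C_{1},\dots,C_{m}$ of $G_{\Ycal}$, so $J$ is a sum of ideals $J_{k}$ supported on the disjoint variable sets $\{p_{ix}:x\in C_{k}\}$, each $J_{k}$ being the ideal of $2\times 2$ minors of a generic $d_{0}\times|C_{k}|$ matrix. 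Hence $\Ring/I_{G,\Ycal}$ is the tensor product over $\Field$ of the determinantal quotient rings $\Field[p_{ix}:x\in C_{k}]/J_{k}$. By the classical fact that the ideal of $2\times 2$ minors of a generic matrix is prime, each factor is a domain; in fact the rank-$\le 1$ locus is absolutely irreducible, so each factor is geometrically integral over $\Field$. A tensor product over a field of geometrically integral $\Field$-algebras is again (geometrically) integral, so $I_{G,\Ycal}$ is prime.

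Rationality follows from an explicit parametrization of each determinantal block. The rank-$\le 1$ matrices of format $d_{0}\times|C_{k}|$ are exactly the outer products $\tilde p_{x}=b_{x}\,a$ with $a\in\Field^{\Xcal_{0}}$ and scalars $b_{x}$ for $x\in C_{k}$; equivalently this block is the affine cone over the Segre image of $\P^{d_{0}-1}\times\P^{|C_{k}|-1}$. Localizing at a nonvanishing entry $p_{i_{0}x_{0}}$ expresses every $p_{ix}$ as $p_{ix_{0}}p_{i_{0}x}/p_{i_{0}x_{0}}$, so the function field of the block is generated by the $d_{0}+|C_{k}|-1$ coordinates $p_{i_{0}x_{0}}$, $p_{ix_{0}}$ and $p_{i_{0}x}$ and is purely transcendental over $\Field$. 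A product of rational varieties over $\Field$ is rational, and the zero columns for $x\in\Xin\setminus\Ycal$ contribute no new coordinates, so $V_{G,\Ycal}$ is rational.

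The step I expect to require the most care is the passage from integrality to \emph{geometric} integrality of the determinantal factors: over a non-closed field $\Field$ the primeness of each $J_{k}$ alone does not guarantee that the tensor product is a domain, so one must genuinely invoke absolute irreducibility of the rank-$\le 1$ variety. Conveniently, the same outer-product parametrization used to prove rationality also certifies geometric integrality, which lets the two parts of the argument share a single computation.
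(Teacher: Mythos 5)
Your proof is correct and follows essentially the same route as the paper: split $I_{G,\Ycal}$ into the monomial ideal and the $2\times2$ determinantal ideals supported on the disjoint variable sets of the connected components, invoke (geometric) primeness of the Segre cone to conclude primeness of the sum, and derive rationality from the blocks. You merely fill in details the paper leaves implicit, notably the explicit outer-product parametrization for rationality and the remark that geometric integrality is what makes the disjoint-variable combination work over a non-closed field.
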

\begin{proof}
%  The defining equations of $I_{G,\Ycal}$.
  The first statement follows from the definition of $V_{G,\Ycal}$.  Write $I^{1}_{G,\Ycal}$ for the ideal generated by
  all monomials \eqref{eq:monomials}, and for any $\Zcal\subseteq\Ycal$ write $I^{2}_{\Zcal}$ for the ideal generated by
  the binomials $f_{xy}^{ij}$, with $i,j\in\Xcal_{0}$ and $x,y\in\Zcal$.  Then $I^{1}_{G,\Ycal}$ is obviously prime.
  Each of the $I^{2}_{\Zcal}$ is a $2\times2$ determinantal ideal.  It is a classical (but difficult) result that this
  ideal is the defining ideal of a Segre embedding, and that it is prime (see~\cite{Sturmfels91:GB_of_toric_varieties}
  for a rather modern proof).  In fact, both $I^{1}_{G,\Ycal}$ and $I^{2}_{\Zcal}$ are geometrically prime, i.e.~they
  remain prime over any field extension.  Hence the ideal $I_{G,\Ycal}$ is the sum of the geometrically prime ideals
  $I^{1}_{G,\Ycal}$ and $I^{2}_{\Zcal}$ for all connected components $\Zcal$ of $G_{\Ycal}$, and since the defining
  equations of all these ideals involve disjoint sets of unknowns, $I_{G,\Ycal}$ itself is prime.  $V_{G,\Ycal}$ is
  rational, since the varieties of $I^{1}_{G,\Ycal}$ and $I^{2}_{\Zcal}$ are rational.
\end{proof}
The decomposition~\eqref{eq:decomposition-VG} is not the irreducible decomposition of $V_{G}$, because the union is
redundant.  The redundant components can be removed using Lemma~\ref{lem:VGY-is-sirreducible}:
\begin{lemma}
  \label{lem:VGYcontainsVGZ}
  Let $\Ycal,\Zcal\subseteq\Xin$.  Then $V_{G,\Ycal}$ is contained in $V_{G,\Zcal}$ if and only
  if the following two conditions are satisfied:
  \begin{itemize}
  \item $\Ycal\subseteq\Zcal$.
  \item If $x,y\in\Ycal$ are connected in $G_{\Zcal}$, then they are connected in $G_{\Ycal}$.
  \end{itemize}
\end{lemma}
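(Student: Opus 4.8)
The plan is to work directly with the explicit description of $V_{G,\Ycal}$ given just before the statement: a point $p$ lies in $V_{G,\Ycal}$ exactly when $\tilde p_{x}=0$ for every $x\in\Xin\setminus\Ycal$ and $\tilde p_{x},\tilde p_{y}$ are proportional whenever $x,y$ lie in a common connected component of $G_{\Ycal}$. The crucial feature I would exploit is that proportionality within each component is the \emph{only} constraint, so I am free to assign to the vertices of each connected component of $G_{\Ycal}$ a single common direction in $\Field^{\Xcal_{0}}$, choosing these directions independently across distinct components and setting all off-$\Ycal$ columns to zero; any such assignment automatically yields a point of $V_{G,\Ycal}$. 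Since $d_{0}=|\Xcal_{0}|>1$, the standard basis vectors $e_{1},e_{2}\in\Field^{\Xcal_{0}}$ are non-proportional and supply the witnesses needed below. Recall also, as noted in the text, that the zero column is proportional to every vector.

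For the ``if'' direction I would assume $\Ycal\subseteq\Zcal$ together with the connectivity condition and take an arbitrary $p\in V_{G,\Ycal}$. The support of $p$ lies in $\Ycal\subseteq\Zcal$, so the vanishing condition defining $V_{G,\Zcal}$ holds. To check the proportionality condition for $V_{G,\Zcal}$, let $x,y$ be connected in $G_{\Zcal}$: if $x\notin\Ycal$ or $y\notin\Ycal$ the relevant column vanishes and proportionality is automatic, whereas if $x,y\in\Ycal$ the hypothesis forces $x,y$ to be connected already in $G_{\Ycal}$, so $\tilde p_{x},\tilde p_{y}$ are proportional because $p\in V_{G,\Ycal}$. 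Hence $p\in V_{G,\Zcal}$.

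For the ``only if'' direction I would argue by contraposition on each condition separately. If $\Ycal\not\subseteq\Zcal$, I pick $x\in\Ycal\setminus\Zcal$ and build $p$ by setting $\tilde p_{z}=e_{1}$ for every $z$ in the connected component of $x$ in $G_{\Ycal}$ and $\tilde p_{z}=0$ otherwise; then $p\in V_{G,\Ycal}$ but $\tilde p_{x}\neq0$ while $x\notin\Zcal$, so $p\notin V_{G,\Zcal}$. If instead the connectivity condition fails, there are $x,y\in\Ycal$ connected in $G_{\Zcal}$ yet lying in distinct components of $G_{\Ycal}$; I set $\tilde p_{z}=e_{1}$ on the component of $x$, $\tilde p_{z}=e_{2}$ on the component of $y$, and $\tilde p_{z}=0$ on all remaining vertices. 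This $p$ lies in $V_{G,\Ycal}$, but $\tilde p_{x}=e_{1}$ and $\tilde p_{y}=e_{2}$ are non-proportional although $x,y$ are connected in $G_{\Zcal}$, so again $p\notin V_{G,\Zcal}$. In both cases $V_{G,\Ycal}\not\subseteq V_{G,\Zcal}$.

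The argument is essentially routine once the parametrization of $V_{G,\Ycal}$ is in hand, and I do not expect a genuine obstacle. The only points demanding care lie in the necessity direction: one must verify that the constructed witnesses really are points of $V_{G,\Ycal}$ (immediate, since assignments that are constant on each component and zero off $\Ycal$ satisfy every proportionality constraint by construction), and one must invoke $d_{0}>1$ to guarantee that non-proportional directions such as $e_{1}$ and $e_{2}$ exist in the first place.
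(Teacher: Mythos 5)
Your proof is correct and follows essentially the same route as the paper's: the sufficiency direction is the same direct verification, and for necessity both arguments exhibit explicit witness points of $V_{G,\Ycal}$ (an indicator-type point to force $\Ycal\subseteq\Zcal$, and a point coloured with two linearly independent vectors to force the connectivity condition). The only cosmetic difference is that the paper phrases the first step via the ideal containment $I_{G,\Ycal}\supseteq I_{G,\Zcal}$ before evaluating at the indicator point, whereas you work with points throughout.
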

\begin{proof}
  Assume that $V_{G,\Ycal}\subseteq V_{G,\Zcal}$.  Then $I_{G,\Ycal}\supseteq I_{G,\Zcal}$.  For
  any $x\in\Xin\setminus\Zcal$ and any $i\in\Xcal_{0}$ this implies $p_{ix}\in I_{G,\Ycal}$.  On the
  other hand, Lemma~\ref{lem:VGY-is-sirreducible} shows that the point with coordinates
  \begin{equation*}
    p_{iy}=
    \begin{cases}
      1, & \qquad\text{if }y\in\Ycal,\\
      0, & \qquad\text{else},
    \end{cases}
  \end{equation*}
  lies in $V_{G,\Ycal}$ and hence in $V_{G,\Zcal}$.  This implies $x\in\Xin\setminus\Ycal$; and so
  $\Ycal\subseteq\Zcal$.

  Let $x\in\Ycal$.
  Choose two linearly independent non-zero vectors $v,w\in\Field^{d_{0}}$.  By Lemma~\ref{lem:VGY-is-sirreducible} the
  matrix with columns
  \begin{equation*}
    \tilde p_{y}=
    \begin{cases}
      v, & \qquad\text{if }y\in\Ycal\text{ is connected to }x\text{ in }G_{\Ycal},\\
      w, & \qquad\text{if }y\in\Ycal\text{ is not connected to }x\text{ in }G_{\Ycal},\\
      0, & \qquad\text{else},
    \end{cases}
  \end{equation*}
  is contained in $V_{G,\Ycal}$ and hence in $V_{G,\Zcal}$.  Therefore, if $z$ is connected to $x$ in
  $G_{\Zcal}$, then it is connected to $x$ in $G_{\Ycal}$.

  Conversely, if both conditions are satisfied, then all defining equations of $I_{G,\Zcal}$ lie in~$I_{G,\Ycal}$.
\end{proof}
\begin{thm}
  \label{thm:primary-decomposition}
  The primary decomposition of $V_{G}$ is
  % \begin{equation*}
  $I_{G} = \bigcap_{\Ycal} I_{G,\Ycal}$,
  % \end{equation*}
  where the intersection is over all $\Ycal\subseteq\Xin$ such that the following holds: For any
  $x\in\Xin\setminus\Ycal$ there are edges $(x,y)$, $(x,z)$ in $G$ such that $y,z\in\Ycal$ are not
  connected in $G_{\Ycal}$.  Equivalently, for any $x\in\Xin\setminus\Ycal$ the induced subgraph
  $G_{\Ycal\cup\{x\}}$ has fewer connected components than $G_{\Ycal}$.
\end{thm}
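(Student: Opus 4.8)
The plan is to identify the minimal primes of $I_G$ with the maximal members of the family $\{V_{G,\Ycal}\}$, and then to characterize maximality combinatorially. Since $I_G$ is radical by Corollary~\ref{cor:radical}, its primary decomposition is the intersection of its minimal primes, and these correspond to the irreducible components of $V_G$. By~\eqref{eq:decomposition-VG} we have $V_G=\bigcup_\Ycal V_{G,\Ycal}$, and each $V_{G,\Ycal}$ is irreducible by Lemma~\ref{lem:VGY-is-sirreducible}; hence the irreducible components of $V_G$ are precisely the $V_{G,\Ycal}$ that are maximal with respect to inclusion, and the theorem amounts to showing that $V_{G,\Ycal}$ is maximal exactly when the stated condition on $\Ycal$ holds. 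One also checks directly, generator by generator, that $I_G\subseteq I_{G,\Ycal}$ for every $\Ycal$ (for an edge $(x,y)$ with $x\notin\Ycal$ each term of $f^{ij}_{xy}$ is divisible by some variable $p_{kx}$ with $x\notin\Ycal$; if $x,y\in\Ycal$ then they are adjacent, hence lie in a common component of $G_\Ycal$), so that the intersection over the maximal $\Ycal$ contains $I_G$.

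Next I would reduce maximality to single-vertex enlargements using Lemma~\ref{lem:VGYcontainsVGZ}. Suppose $V_{G,\Ycal}\subsetneq V_{G,\Zcal}$; then $\Ycal\subsetneq\Zcal$, and I claim that for any $x\in\Zcal\setminus\Ycal$ already $V_{G,\Ycal}\subsetneq V_{G,\Ycal\cup\{x\}}$. Indeed $\Ycal\subseteq\Ycal\cup\{x\}$, and if $y,z\in\Ycal$ are connected in $G_{\Ycal\cup\{x\}}$ then, since $G_{\Ycal\cup\{x\}}$ is a subgraph of $G_\Zcal$, they are connected in $G_\Zcal$, hence connected in $G_\Ycal$ by the hypothesis $V_{G,\Ycal}\subseteq V_{G,\Zcal}$; thus both conditions of Lemma~\ref{lem:VGYcontainsVGZ} hold and $V_{G,\Ycal}\subseteq V_{G,\Ycal\cup\{x\}}$. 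Strictness follows because $\Ycal\cup\{x\}\not\subseteq\Ycal$ rules out the reverse inclusion. Consequently $V_{G,\Ycal}$ is maximal if and only if $V_{G,\Ycal}\not\subseteq V_{G,\Ycal\cup\{x\}}$ for every $x\in\Xin\setminus\Ycal$.

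It then remains to translate this single-vertex criterion into the combinatorial condition. By Lemma~\ref{lem:VGYcontainsVGZ}, $V_{G,\Ycal}\subseteq V_{G,\Ycal\cup\{x\}}$ fails exactly when there are $y,z\in\Ycal$ connected in $G_{\Ycal\cup\{x\}}$ but not in $G_\Ycal$. Since $x$ is the only vertex of $G_{\Ycal\cup\{x\}}$ outside $\Ycal$, any such connecting path must pass through $x$, which forces edges $(x,y')$ and $(x,z')$ with $y',z'\in\Ycal$ lying in distinct components of $G_\Ycal$; conversely two such edges produce the path $y'\,x\,z'$. Thus the failure for a given $x$ is equivalent to the existence of edges $(x,y),(x,z)$ in $G$ with $y,z\in\Ycal$ not connected in $G_\Ycal$, that is, to $x$ having neighbours in at least two connected components of $G_\Ycal$, and therefore to $G_{\Ycal\cup\{x\}}$ having strictly fewer connected components than $G_\Ycal$. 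Requiring this for all $x\in\Xin\setminus\Ycal$ is exactly the condition in the statement, and it characterizes the maximal $\Ycal$.

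The main obstacle is the passage between varieties and ideals over a field $\Field$ that need not be algebraically closed, where the irreducible-component argument is most transparent only after base change to $\cField$. I would handle this by invoking the fact, recorded in the proof of Lemma~\ref{lem:VGY-is-sirreducible}, that the ideals $I_{G,\Ycal}$ remain prime over any field extension: the inclusion relations among the $V_{G,\Ycal}$ are governed by the purely combinatorial criterion of Lemma~\ref{lem:VGYcontainsVGZ} (valid whenever $d_0>1$) and are hence unchanged under extension to $\cField$, so the set of maximal $\Ycal$ is the same over $\Field$ and over $\cField$. The minimal primes of $I_G$ over $\cField$ are therefore the $I_{G,\Ycal}\cdot\cField[\,p_x\,]$ with $\Ycal$ maximal, and by geometric primeness these descend to the ideals $I_{G,\Ycal}$, giving $I_G=\bigcap_{\Ycal\text{ maximal}}I_{G,\Ycal}$. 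Irredundancy of the intersection is immediate from Lemma~\ref{lem:VGYcontainsVGZ}, since two distinct maximal $\Ycal$ yield incomparable, hence distinct, components.
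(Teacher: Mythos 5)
Your proposal is correct and takes essentially the same route as the paper: reduce to the algebraically closed case, identify the primary components with the maximal members of the family $\{V_{G,\Ycal}\}$ via Lemma~\ref{lem:VGYcontainsVGZ}, and descend to general $\Field$ using the fact that the $I_{G,\Ycal}$ are geometrically prime and defined over the prime field. Your single-vertex reduction merely fills in the combinatorial step that the paper compresses into ``this follows from Lemma~\ref{lem:VGYcontainsVGZ}.''
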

\begin{proof}
  First, assume that $\Field$ is algebraically closed.  By~\eqref{eq:decomposition-VG} and
  Lemma~\ref{lem:VGY-is-sirreducible} it suffices to show that the condition on $\Ycal$ stated in the theorem
  characterizes the maximal sets $V_{G,\Ycal}$ in the union~\eqref{eq:decomposition-VG} (with respect to inclusion).
  This follows from Lemma~\ref{lem:VGYcontainsVGZ}.

  If $\Field$ is not algebraically closed, then one can argue as follows: By~\cite{EisenbudSturmfels96:Binomial_Ideals} a
  binomial ideal has a binomial primary decomposition over some algebraic extension field
  $\hat\Field=\Field[\alpha_{1},\dots,\alpha_{k}]$.  The algebraic numbers $\alpha_{1},\dots,\alpha_{k}$ are coefficients of
  the defining equations of the primary components.  Let $\cField$ be the algebraic closure of $\Field$.  Since the ideals
  $I_{G,\Ycal}$ are defined by pure differences and since the ideals $\cField\otimes I_{G,\Ycal}$ are the primary components
  of $\cField\otimes I_{G,\Ycal}$ in $\cField\otimes\Ring$ it follows that the ideals $I_{G,\Ycal}$ are already the primary
  components of $I_{G}$ (in other words, the primary decomposition is independent of the base field).
\end{proof}

\begin{rem}[Comparison to~\cite{Ohtani11:Ideals_of_some_2-minors,HHHKR10:Binomial_Edge_Ideals}]
  \label{rem:comparison-to-HHH}
  Both~\cite{Ohtani11:Ideals_of_some_2-minors} and~\cite{HHHKR10:Binomial_Edge_Ideals} discuss Gröbner bases and primary
  decompositions of binomial edge ideals with $d_{0}=2$.  % The Gröbner basis of
  Theorem~\ref{thm:Gbasis} generalizes Theorems~2.1 from~\cite{HHHKR10:Binomial_Edge_Ideals} and Theorem~3.2
  in~\cite{Ohtani11:Ideals_of_some_2-minors}.  While the proofs in~\cite{HHHKR10:Binomial_Edge_Ideals}
  and~\cite{Ohtani11:Ideals_of_some_2-minors} use a case by case analysis, the proof of Theorem~\ref{thm:Gbasis} is more
  conceptual.

  The primary decomposition in Theorem~\ref{thm:primary-decomposition} generalizes Theorem~3.2
  from~\cite{HHHKR10:Binomial_Edge_Ideals}.  The proof of Theorem~\ref{thm:primary-decomposition} relied on the
  irreducible decomposition of the corresponding variety, while the proof
  in~\cite{HHHKR10:Binomial_Edge_Ideals} directly shows the equality of the two ideals.

  Instead of describing the primary decomposition explicitly, \cite{Ohtani11:Ideals_of_some_2-minors} presents an
  algorithm to compute the primary decomposition.  Since the primary decomposition of a binomial edge ideal is
  independent of~$d_{0}$, the same algorithm applies for all~$d_{0}$.  A nice feature of the algorithm is that it works
  graph-theoretically.
\end{rem}

\subsection*{Acknowledgement}

This work has been supported by the Volkswagen Foundation.  I thank Nihat Ay, Thomas Kahle, Seth Sullivant, Jürgen
Herzog and Takayuki Hibi.  Further thanks goes to the reviewer for many helpful remarks.

\bibliographystyle{IEEEtranSpers}
\bibliography{general}

% Generated by IEEEtranS.bst, version: 1.12 (2007/01/11)
\begin{thebibliography}{1}
\providecommand{\url}[1]{#1}
\csname url@samestyle\endcsname
\providecommand{\newblock}{\relax}
\providecommand{\bibinfo}[2]{#2}
\providecommand{\BIBentrySTDinterwordspacing}{\spaceskip=0pt\relax}
\providecommand{\BIBentryALTinterwordstretchfactor}{4}
\providecommand{\BIBentryALTinterwordspacing}{\spaceskip=\fontdimen2\font plus
\BIBentryALTinterwordstretchfactor\fontdimen3\font minus
  \fontdimen4\font\relax}
\providecommand{\BIBforeignlanguage}[2]{{%
\expandafter\ifx\csname l@#1\endcsname\relax
\typeout{** WARNING: IEEEtranS.bst: No hyphenation pattern has been}%
\typeout{** loaded for the language `#1'. Using the pattern for}%
\typeout{** the default language instead.}%
\else
\language=\csname l@#1\endcsname
\fi
#2}}
\providecommand{\BIBdecl}{\relax}
\BIBdecl

\bibitem{EisenbudSturmfels96:Binomial_Ideals}
{\sc Eisenbud, D.,  and Sturmfels, B.}, ``Binomial ideals,'' \emph{Duke
  Mathematical Journal}, vol.~84, no.~1, pp. 1--45, 1996.

\bibitem{Fink11:Binomial_ideal_of_intersection_axiom}
{\sc Fink, A.}, ``The binomial ideal of the intersection axiom for conditional
  probabilities,'' \emph{Journal of Algebraic Combinatorics}, vol.~33, no.~3,
  pp. 455--463, 2011.

\bibitem{HHHKR10:Binomial_Edge_Ideals}
{\sc Herzog, J., Hibi, T., Hreinsd\'{o}ttir, F., Kahle, T.,  and Rauh, J.},
  ``Binomial edge ideals and conditional independence statements,''
  \emph{Advances in Applied Mathematics}, vol.~45, no.~3, pp. 317 -- 333, 2010.

\bibitem{Ohtani11:Ideals_of_some_2-minors}
{\sc Ohtani, M.}, ``\BIBforeignlanguage{English}{{Graphs and ideals generated
  by some 2-minors.}}'' \emph{\BIBforeignlanguage{English}{Commun. Algebra}},
  vol.~39, no.~3, pp. 905--917, 2011.

\bibitem{RauhAy12:Robustness_and_CI}
{\sc Rauh, J.,  and Ay, N.}, ``Robustness, canalyzing functions and systems
  design,'' \emph{submitted}, 2012.

\bibitem{Sturmfels91:GB_of_toric_varieties}
{\sc Sturmfels, B.}, ``Gröbner bases of toric varieties,'' \emph{Tohoku
  Mathematical Journal}, vol.~43, pp. 249 -- 261, 1991.

\bibitem{SwansonTaylor11:Minimial_Primes_of_CI_Ideals}
{\sc Swanson, I.,  and Taylor, A.}, ``Minimal primes of ideals arising from
  conditional independence statements,'' \emph{Preprint: arXiv:1107.5604v3},
  2011.

\end{thebibliography}

\end{document}